\theoremstyle{theorem}
\newtheorem{theorem}{Theorem}
\newtheorem*{lemma}{Lemma}
\theoremstyle{definition}
\begin{document}

\title{Extension of the Bertrand--De Morgan Test and Its Application}
\markright{Notes}
\markright{Extension of the Bertrand-De Morgan test}
\author{Vyacheslav M. Abramov}

\maketitle

\begin{abstract}
We provide a simple proof for the extended Bertrand--De Morgan test that was earlier studied in [\v{D}uri\v{s}, F. Infinite Series: Convergence tests. Bachelor thesis, 2009] and [Tabatabai Adnani, A. A. Reza, A., Morovati, M. (2013), \textit{J. Lin. Topol. Algebra}, 2(3): 141--147]
and demonstrate an application of that test in the theory of birth-and-death processes.
\end{abstract}

{\centering Dedicated to the memory of Robert Liptser (1936--2019).

}
%

\section{Introduction.}

Let
\begin{equation}\label{0}
\sum_{n=1}^\infty a_n
\end{equation}
be a series of positive numbers. The simplest test for convergence or divergence of series \eqref{0} is the ratio test. The first ratio test, the most elementary, was due to d'Alembert \cite{d'Al}. According to the d'Alembert test (also known as the Cauchy test), the series \eqref{0} converges if $a_{n+1}/a_n<l<1$ for all large $n$ and diverges if $a_{n+1}/a_n\geq1$ for all large $n$. The more general tests obtained by Raabe, Gauss, Bertrand, De Morgan, and Kummer, which required weaker conditions for convergence, were classified into the De Morgan hierarchy of ratio tests \cite{Bl, Br}.
In the present note we discuss the Bertrand--De Morgan test \cite{W} and its extension. While the Bertrand--De Morgan test has been known for a long time, extensions were obtained only recently.

Two different extensions of the Bertrand--De Morgan test are suggested in \cite{D}. One extension appears in Chapter 4, and another one appears in Chapter 7. In Chapter 4, the extension is based on adding Gauss's improvement term, and the main results are given by Theorems 4.1.3 and 4.1.4 there, one for the convergence test and another for divergence. In Chapter 7, another test that is based on an iteration logarithm scheme is provided. The same test as that in \cite[Chapter 7]{D} is provided in \cite{AAT}. The proofs of this last test in both of these studies
\cite{D} and \cite{AAT} are long and technically complicated.  In addition, the proof given in \cite{AAT} has a technical restriction, namely a requirement for existence of the limits (see relation (12) in \cite{AAT} and the further limit relations there); it is also incomplete in the case of divergent series.

In the present note we provide a simple and transparent proof of the extended Bertrand--De Morgan test and demonstrate an application of that test to the theory of birth-and-death processes. The proof is based on reduction to Kummer's test. Kummer's test itself is a universal test that is of special interest in the literature. It covers all positive series \cite{T}, and there are different approaches to this test in the literature (e.g., \cite{D1,Kn,Sa}). Kummer's test is a good subject for verification of new particular tests that may apply to more general classes of series than the known tests.

The note is organized as follows.
In Section \ref{S2}, we recall the Bertrand--De Morgan test and discuss its main restriction. In Section \ref{S3}, we provide a simple proof of the extended Bertrand--De Morgan test.
In Section \ref{S4}, we provide an application of the extended Bertrand--De Morgan test to birth-and-death processes.

\section{The Bertrand--De Morgan test.}\label{S2}
For series \eqref{0}, the Bertrand--De Morgan test is as follows.

\begin{theorem}\label{thm1}
Suppose that the ratio $a_n/a_{n+1}$, where $n$ is large, can be presented in the form
\begin{equation*}\label{1}
\frac{a_n}{a_{n+1}}=1+\frac{1}{n}+\frac{r_n}{n\ln n}.
\end{equation*}
Then series \eqref{0} converges if $\liminf_{n\to\infty}r_n>1$, and it diverges if $\limsup_{n\to\infty}r_n<1$.
\end{theorem}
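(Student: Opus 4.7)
The plan is to derive Theorem \ref{thm1} as a direct consequence of Kummer's test by choosing the auxiliary sequence $b_n=n\ln n$. Recall that Kummer's test asserts that $\sum a_n$ converges if $\liminf_{n\to\infty}\bigl(b_n\frac{a_n}{a_{n+1}}-b_{n+1}\bigr)>0$ for some positive sequence $b_n$, and diverges if this expression has negative $\limsup$ and in addition $\sum 1/b_n$ diverges. With $b_n=n\ln n$ the series $\sum 1/b_n$ is the classical Bertrand series, whose divergence follows at once from the integral test ($\int dx/(x\ln x)=\ln\ln x$), so the second hypothesis is free.

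First I would substitute the given expansion of $a_n/a_{n+1}$ into the Kummer expression. This gives
\[
b_n\frac{a_n}{a_{n+1}}-b_{n+1}
=n\ln n\left(1+\frac{1}{n}+\frac{r_n}{n\ln n}\right)-(n+1)\ln(n+1)
=r_n-(n+1)\ln\!\left(1+\frac{1}{n}\right),
\]
after cancelling the $n\ln n$ terms and combining the remaining logarithms.

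Next I would estimate the asymptotic size of the correction $(n+1)\ln(1+1/n)$. Expanding $\ln(1+1/n)$ as a power series in $1/n$ yields $(n+1)\ln(1+1/n)=1+O(1/n)$, so
\[
b_n\frac{a_n}{a_{n+1}}-b_{n+1}=r_n-1+O(1/n).
\]
Since the $O(1/n)$ term vanishes in the limit, the liminf (respectively limsup) of the left-hand side equals $\liminf r_n-1$ (respectively $\limsup r_n-1$).

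The conclusion is then immediate from Kummer's test: if $\liminf r_n>1$ the Kummer expression has positive liminf and $\sum a_n$ converges; if $\limsup r_n<1$ it has negative limsup and, together with the divergence of $\sum 1/(n\ln n)$ noted above, $\sum a_n$ diverges. The only nonroutine step is recognizing that $b_n=n\ln n$ is the right auxiliary sequence and verifying the asymptotic identity above; both are elementary, which is precisely why the reduction to Kummer's test yields a proof much shorter than those in \cite{D} and \cite{AAT}.
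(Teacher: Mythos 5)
Your proof is correct and follows essentially the same route as the paper, which establishes the general Theorem \ref{thm2} via Kummer's test with the weight $\zeta_n=n\prod_{k=1}^{K}\ln_{(k)}n$; your argument is precisely the $K=1$ case with $b_n=n\ln n$, including the integral-test verification that $\sum 1/(n\ln n)$ diverges and the asymptotic cancellation yielding $r_n-1+o(1)$.
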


The  restriction of the test is that the condition for convergence, $\liminf_{n\to\infty}r_n>1$, and divergence, $\limsup_{n\to\infty}r_n<1$, are strongly separated. In particular, if $\lim_{n\to\infty}r_n$ exists, the theorem does not provide information about the situation when $\lim_{n\to\infty}r_n=1$. It follows from the proof of Lemma 4.1 in \cite{A} that if this convergence is from the below, then \eqref{0} diverges. If it is not, then the series may either converge or diverge. The extended Bertrand--De Morgan test that is provided in the next section enables us to test the variety of cases where $\lim_{n\to\infty}r_n=1$.
%
%
%
\section{The extended Bertrand--De Morgan test.}\label{S3}

For formulation of the main result of this note, the following notation is needed. Let $K\geq1$ be an integer, and let $\ln_{(K)}x$ denote the $K$th iterate of natural logarithm, i.e., $\ln_{(1)}x=\ln x$, and for any $2\leq k\leq K$, $\ln_{(k)}x=\ln_{(k-1)}\big(\ln x\big)$.

\begin{theorem}\label{thm2}
Suppose that the ratio $a_n/a_{n+1}$, where $n$ is large, can be presented in the form
\begin{equation}\label{9}
\begin{aligned}
\frac{a_n}{a_{n+1}}=1+\frac{1}{n}+\frac{1}{n}\sum_{i=1}^{K-1}\frac{1}{\prod_{k=1}^{i}\ln_{(k)}n}+\frac{s_n}{n\prod_{k=1}^{K}\ln_{(k)}n}, \quad K\geq1.
\end{aligned}
\end{equation}
Then series \eqref{0} converges if $\liminf_{n\to\infty}s_n>1$, and it diverges if $\limsup_{n\to\infty}s_n<1$. (The empty sum is assumed to be $0$.)
\end{theorem}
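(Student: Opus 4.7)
The plan is to apply Kummer's test with the auxiliary sequence
\[
b_n \;=\; n\,\prod_{k=1}^{K}\ln_{(k)}n,
\]
and show that
\[
b_n\,\frac{a_n}{a_{n+1}}\;-\;b_{n+1}\;=\;s_n-1+o(1)
\]
as $n\to\infty$. Given this asymptotic identity, the dichotomy $\liminf s_n>1$ vs.\ $\limsup s_n<1$ translates directly into the Kummer conditions for convergence and divergence, once one verifies that $\sum 1/b_n$ diverges, which is standard (iterated Cauchy condensation, or equivalently the integral test applied $K$ times).

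First I would substitute the hypothesis \eqref{9} into $b_n\,a_n/a_{n+1}$. Because $b_n$ has precisely the form $n\prod_{k=1}^K \ln_{(k)}n$, multiplying through makes the $n$ and the logarithmic product cancel the denominators of the successive terms in \eqref{9}, yielding the clean identity
\[
b_n\,\frac{a_n}{a_{n+1}}\;=\;n\prod_{k=1}^{K}\ln_{(k)}n\;+\;\prod_{k=1}^{K}\ln_{(k)}n\;+\;\sum_{i=1}^{K-1}\prod_{k=i+1}^{K}\ln_{(k)}n\;+\;s_n.
\]
This is the reason for the particular choice of $b_n$: it exactly matches the structure of the expansion in \eqref{9}.

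Next I would expand $b_{n+1}$ asymptotically. The key auxiliary fact, proved by induction on $k$, is that
\[
\ln_{(k)}(n+1)\;=\;\ln_{(k)}n\;+\;\frac{1}{n\,\prod_{j=1}^{k-1}\ln_{(j)}n}\;+\;O\!\left(\frac{1}{n^{2}}\right),
\]
where the induction step uses $\ln(1+x)=x+O(x^2)$ inside $\ln_{(k)}(n+1)=\ln\bigl(\ln_{(k-1)}(n+1)\bigr)$. Multiplying these expansions and then multiplying by $(n+1)$ produces
\[
b_{n+1}\;=\;n\prod_{k=1}^{K}\ln_{(k)}n\;+\;\prod_{k=1}^{K}\ln_{(k)}n\;+\;\sum_{i=1}^{K-1}\prod_{k=i+1}^{K}\ln_{(k)}n\;+\;1\;+\;o(1),
\]
where every surviving cross-term is accounted for and the remainders collapse because each correction introduces a factor $1/\bigl(n\prod_{j\le k}\ln_{(j)}n\bigr)$ that kills the $(n+1)$ out front except at the top level.

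Subtracting the two expressions gives $b_n\,a_n/a_{n+1}-b_{n+1}=s_n-1+o(1)$, and Kummer's test closes the argument. The main technical obstacle is the bookkeeping in the second step: one must track which cross-products from expanding $\prod_{k=1}^{K}\ln_{(k)}(n+1)$ are of size $\Theta(1)$ and which are $o(1)$, and verify that the $\Theta(1)$ contributions assemble precisely into the constant $1$ plus the telescoping logarithmic terms already present in $b_n\,a_n/a_{n+1}$. Once this accounting is right, convergence is immediate from Kummer with $c=\liminf s_n-1>0$, and divergence from Kummer's reverse criterion with $c=1-\limsup s_n>0$, using the divergence of $\sum 1/b_n$.
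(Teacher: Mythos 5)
Your proposal is correct and follows essentially the same route as the paper: Kummer's test with $\zeta_n=n\prod_{k=1}^{K}\ln_{(k)}n$, divergence of $\sum 1/\zeta_n$ via the integral/condensation argument, the expansion $\ln_{(k)}(n+1)=\ln_{(k)}n+1/\bigl(n\prod_{j=1}^{k-1}\ln_{(j)}n\bigr)+O(n^{-2})$, and the resulting identity $\rho_n=s_n-1+o(1)$. The only cosmetic difference is that you obtain the iterated-logarithm expansion by induction on $k$ using $\ln(1+x)=x+O(x^2)$, whereas the paper uses a Taylor/alternating-series argument for $f_k(x+1)$; both yield the same estimate.
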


The restriction of the test is similar to that given by Theorem \ref{thm1}. If $\lim_{n\to\infty}s_n=1$, then generally we cannot determine convergence or divergence.
Then the additional remainder term written as $s_n^\prime/\big(n\prod_{k=1}^{K+1}\ln_{(k)}n\big)$ determines convergence or divergence of \eqref{0}. According to Theorem \ref{thm2}, if $s_n^\prime\leq1$ for all large $n$, then the series diverges, but if $s_n^\prime>1+\epsilon$, $\epsilon>0$, for all large $n$, it converges. For instance, if the aforementioned remainder term is $o\big[1/\big(n\prod_{k=1}^{K+1}\ln_{(k)}n\big)\big]$
(in particular $O\big[1/\big(n(\ln_{(K+1)}n)^r\prod_{k=1}^{K}\ln_{(k)}n\big)\big]$ with $r>1$),
then \eqref{0} diverges. If, however, it is $O\big[1/\big(n(\ln_{(K+1)}n)^r\prod_{k=1}^{K}\ln_{(k)}n\big)\big]$ with $r<1$, then the series converges (see \cite[Theorem 7.0.10, p. 47]{D}).

Apparently, the set of cases for which one can conclude convergence or divergence of \eqref{0} in Theorem \ref{thm2} is richer than that given by Theorem \ref{thm1}.

\begin{proof} The proof of the theorem is based on Kummer's test, the formulation of which is as follows.

\begin{lemma}\label{lem} Let $\zeta_n$, $n\geq N$, where $N$ is some large number, be a sequence of positive values, and let
$$
\rho_n=\left(\zeta_n\frac{a_n}{a_{n+1}}-\zeta_{n+1}\right),
$$
where positive numbers $a_n$ are defined as in series \eqref{0}. Then series \eqref{0} converges if $\liminf_{n\to\infty}\rho_n>0$, and diverges if $\limsup_{n\to\infty}\rho_n<0$ and $\sum_{n=N}^{\infty}1/\zeta_n=\infty$.
\end{lemma}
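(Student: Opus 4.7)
The plan is to prove both halves of Kummer's test by elementary arguments that work directly from the definition of $\rho_n$: a telescoping comparison for the convergence half, and a monotonicity plus comparison argument for the divergence half.

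For the convergence direction I assume $\liminf_{n\to\infty}\rho_n>0$, so there exist $\delta>0$ and $N_0\geq N$ with $\zeta_n a_n/a_{n+1}-\zeta_{n+1}\geq \delta$ for all $n\geq N_0$. Multiplying through by the positive quantity $a_{n+1}$ gives $\delta\, a_{n+1}\leq \zeta_n a_n-\zeta_{n+1}a_{n+1}$, whose right-hand side is a telescoping difference. Summing from $n=N_0$ to $n=M$ and discarding the nonnegative term $\zeta_{M+1}a_{M+1}$ yields $\delta\sum_{n=N_0}^{M}a_{n+1}\leq \zeta_{N_0}a_{N_0}$. This is a uniform upper bound on the partial sums of a series of positive terms, so $\sum a_n$ converges.

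For the divergence direction I assume $\limsup_{n\to\infty}\rho_n<0$ and $\sum_{n=N}^{\infty}1/\zeta_n=\infty$. Then for some $N_0\geq N$ one has $\rho_n\leq 0$ for all $n\geq N_0$, which rearranges to $\zeta_n a_n\leq \zeta_{n+1}a_{n+1}$. Hence $(\zeta_n a_n)_{n\geq N_0}$ is non-decreasing, so $a_n\geq \zeta_{N_0}a_{N_0}/\zeta_n$ for every $n\geq N_0$. Direct comparison with the divergent series $\sum 1/\zeta_n$ then forces $\sum a_n=\infty$.

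There is no deep obstacle; the proof is a classical telescoping/comparison argument. The only care needed is in the bookkeeping of the inequality constants: in the convergence half one genuinely uses the \emph{strictly positive} gap $\delta$ obtained from $\liminf>0$ so that the coefficient of the telescoped sum is nonzero, whereas in the divergence half it suffices to extract the weaker conclusion $\rho_n\leq 0$ from $\limsup<0$, which is exactly what the monotonicity step requires. One should also note that the hypothesis $\sum 1/\zeta_n=\infty$ is used only in the divergence half, as is visible from the structure of the two arguments.
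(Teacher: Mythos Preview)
Your proof is correct and is precisely the classical argument for Kummer's test: telescoping of $\zeta_n a_n - \zeta_{n+1}a_{n+1}$ for the convergence half, and monotonicity of $\zeta_n a_n$ followed by comparison with $\sum 1/\zeta_n$ for the divergence half. Both steps are handled cleanly, including the observation that only the weaker inequality $\rho_n\leq 0$ is needed in the divergence direction.

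Note, however, that the paper does not supply its own proof of this lemma: it is quoted as Kummer's test, a known result from the literature (see the references to Tong, Samelson, \v{D}uri\v{s}, and Knopp), and is invoked as a black box in the proof of Theorem~\ref{thm2}. So there is no ``paper's proof'' to compare against; your argument simply fills in the omitted classical justification.
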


We use Kummer's test with
$
\zeta_n=n\prod_{k=1}^{K}\ln_{(k)}n.
$
Let $A$ be an integer for which $\ln_{(K)}A$ is positive. Notice that $\sum_{n=A}^\infty1/\zeta_n$ diverges by comparison with the corresponding integral below. Indeed, let $f_k(x)=\ln_{(k)}x$, $k\geq1$. Then, keeping in mind that
\begin{equation}\label{7}
f_k^\prime(x)=\frac{1}{x\prod_{j=1}^{k-1}\ln_{(j)}x},
\end{equation}
where the empty product is assumed to be $1$,
we obtain
$$
\int_A^\infty\frac{\mathrm{d}x}{x\prod_{k=1}^{K}\ln_{(k)}x}=\int_{A}^{\infty}\mathrm{d}\ln_{(K+1)}x=\ln_{(K+1)}x\Big|_A^\infty=\infty.
$$
Hence,
according to the lemma, series \eqref{0} converges if $\liminf_{n\to\infty}\rho_n>0$, and it diverges if $\limsup_{n\to\infty}\rho_n<0$.

We will need to derive the asymptotic expansions for $\ln_{(k)}(n+1)$ as $n$ tends to infinity. For this purpose we use Taylor's expansion for a twice differentiable function, which in the given case can be written in the form
$f_k(x+\delta)=f_k(x)+\delta f_k^\prime(x)+O(\delta^2f_k^{\prime\prime}(x))$, where $f_k^\prime(x)$ is given by \eqref{7}, and $f_k^{\prime\prime}(x)=O\big(x^{-2}\big)$ as $x\to\infty$.
%
%
%
%
%
For $k\geq1$ we have
\begin{equation}\label{2}
\begin{aligned}
\ln_{(k)}(n+1)
&= \ln_{(k)}(n)+\frac{1}{n\prod_{j=1}^{k-1}\ln_{(j)}n}+O\left(\frac{1}{n^2}\right),
\end{aligned}
\end{equation}
where \eqref{2} is obtained from the above expansion of $f_k(x+\delta)$ for $x=n$ and $\delta=1$.

Now, direct application of expansion \eqref{2} yields the required result.

Indeed, in the case $K=2$, we obtain:
\begin{equation*}
\begin{aligned}
\rho_n=&n\ln n\ln\ln n\frac{a_n}{a_{n+1}}-(n+1)\ln(n+1)\ln\ln(n+1)\\
=& n\ln n\ln\ln n\frac{a_n}{a_{n+1}}-(n+1)\left(\ln n+\frac{1}{n}\right)\left(\ln\ln n+\frac{1}{n\ln n}\right)+o(1)\\
=&n\ln n\ln\ln n\left(\frac{a_n}{a_{n+1}}-1\right)-\ln n\ln\ln n-\ln\ln n-1+o(1)\\
=&n\ln n\ln\ln n\left(\frac{1}{n}+\frac{1}{n\ln n}+\frac{s_n}{n\ln n\ln\ln n}\right)\\
&-\ln n\ln\ln n-\ln\ln n-1+o(1)\\
=&s_n-1+o(1),
\end{aligned}
\end{equation*}
where $o(1)$ includes all smaller terms of the expansion.

In the general case we arrive at the same estimate:
\begin{equation*}\label{5}
\begin{aligned}
\rho_n&= n\prod_{k=1}^{K}\ln_{(k)}n\frac{a_n}{a_{n+1}}-(n+1)\left[\prod_{k=1}^{K}\left(\ln_{(k)}n+\frac{1}{n\prod_{j=1}^{k-1}\ln_{(j)}n}\right)\right]+o(1)\\
&= n\prod_{k=1}^{K}\ln_{(k)}n\left(\frac{a_n}{a_{n+1}}-1\right)-\sum_{j=1}^{K}\prod_{k=1}^{j}\ln_{(K-k+1)}n-1+o(1)\\
&=s_n-1+o(1),
\end{aligned}
\end{equation*}
where all smaller terms are included in the $o(1)$ term.
\end{proof}

\section{Application of Theorem \ref{thm2}.}\label{S4}
In this section, we demonstrate an application of Theorem \ref{thm2} to the theory of birth-and-death processes, improving \cite[Lemma 4.1]{A}. Namely, we prove the following theorem.

\begin{theorem}\label{thm3}
Let the birth and death rates of a birth-and-death process be $\lambda_n$ and $\mu_n$, all in $(0, \infty)$. Then the birth-and-death process is transient if there exist $c>1$, numbers $K\geq1$ and $n_0$ such that for all $n>n_0$
\begin{equation}\label{10}
\frac{\lambda_{n}}{\mu_{n}}\geq1+\frac{1}{n}+\frac{1}{n}\sum_{k=1}^{K-1}\frac{1}{\prod_{j=1}^{k}\ln_{(j)}n}+\frac{c}{n\prod_{j=1}^{K}\ln_{(j)}n},
\end{equation}
and it is recurrent if there exist $K\geq1$ and $n_0$ such that for all $n>n_0$
\begin{equation*}\label{11}
\frac{\lambda_{n}}{\mu_{n}}\leq1+ \frac{1}{n}+\frac{1}{n}\sum_{i=1}^{K}\frac{1}{\prod_{j=1}^{i}\ln_{(j)}n}.
\end{equation*}
\end{theorem}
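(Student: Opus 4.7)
The plan is to reduce the transience/recurrence dichotomy to a convergence/divergence question for a positive series and then apply Theorem \ref{thm2} directly. By the classical criterion for birth-and-death processes, the process is transient if and only if $\sum_{n=1}^\infty c_n < \infty$, where $c_1 = 1$ and $c_n = \prod_{k=1}^{n-1} \mu_k/\lambda_k$ for $n \geq 2$. With this choice of indexing one has $c_n/c_{n+1} = \lambda_n/\mu_n$, so the hypotheses of Theorem \ref{thm3} translate verbatim into bounds on $c_n/c_{n+1}$ that already have the shape prescribed by \eqref{9}; no index-shift bookkeeping is required.

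For the transient part, I read off from \eqref{10} that $c_n/c_{n+1}$ admits the representation \eqref{9} at iteration depth $K$, with the coefficient $s_n$ (obtained by solving for the final term of \eqref{9}) satisfying $s_n \geq c$ for all $n > n_0$. Hence $\liminf_{n\to\infty} s_n \geq c > 1$, and the convergence half of Theorem \ref{thm2} gives $\sum c_n < \infty$, i.e.\ transience. For the recurrent part, the hypothesis on $\lambda_n/\mu_n$ already exhibits the first $K$ post-leading terms of the right-hand side of \eqref{9} but stops short of any final $s_n$-term, so the natural move is to represent $c_n/c_{n+1}$ via \eqref{9} with iteration depth $K' = K+1$ instead. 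The coefficient $s_n$ produced by this representation is then nonpositive for all $n > n_0$; consequently $\limsup_{n\to\infty} s_n \leq 0 < 1$, and the divergence half of Theorem \ref{thm2} delivers $\sum c_n = \infty$, i.e.\ recurrence.

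I do not expect any substantial obstacle in carrying this out: the only real choice is the iteration depth at which Theorem \ref{thm2} is invoked (namely $K$ for the transient hypothesis and $K+1$ for the recurrent one), after which both halves follow from a single application of Theorem \ref{thm2} with no further estimation. The slightly nonstandard indexing $c_n = \prod_{k=1}^{n-1} \mu_k/\lambda_k$ is what allows $c_n/c_{n+1}$ to equal $\lambda_n/\mu_n$ exactly, sidestepping any asymptotic shift that would otherwise be the sole nontrivial bookkeeping step in matching the hypotheses of Theorems \ref{thm2} and \ref{thm3}.
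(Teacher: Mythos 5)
Your proposal is correct and follows the same route as the paper: reduce to the Karlin--McGregor series criterion $\sum_n\prod_{k\le n}\mu_k/\lambda_k=\infty$ for recurrence and then apply Theorem \ref{thm2}. Your reindexing $c_n=\prod_{k=1}^{n-1}\mu_k/\lambda_k$ (avoiding the shift in the paper's $a_n/a_{n+1}=\lambda_{n+1}/\mu_{n+1}$) and your explicit choice of iteration depth $K+1$ for the recurrent half are just careful bookkeeping for details the paper leaves implicit.
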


\begin{proof}
It is known \cite[p. 370]{KM} that a birth-and-death process is recurrent if and only if
\begin{equation}\label{3}
\sum_{n=1}^{\infty}\prod_{k=1}^{n}\frac{\mu_k}{\lambda_k}=\infty.
\end{equation}
Set $a_n=\prod_{k=1}^{n}\mu_k/\lambda_k$. Then \eqref{3} is presented as $\sum_{n=1}^{\infty}a_n=\infty$, and $a_{n}/a_{n+1}=\lambda_{n+1}/\mu_{n+1}$.
Hence, the statement of the theorem follows by application of Theorem \ref{thm2}.
\end{proof}

\begin{acknowledgment}{Acknowledgment.}
The author sincerely thanks the referees for their comprehensive reviews.
\end{acknowledgment}

\begin{affil}
24 Sagan Drive, Cranbourne North, Vic-3977, Australia\\
vabramov126@gmail.com\\
\url{www.vabramov.altervista.org}
\end{affil}

\end{document}